\font\smallit=cmti10
\renewcommand\section{\@startsection {section}{1}{\z@}
{-30pt \@plus -1ex \@minus -.2ex}
{2.3ex \@plus.2ex}
{\normalfont\normalsize\bfseries\boldmath}}
\renewcommand\subsection{\@startsection{subsection}{2}{\z@}
{-3.25ex\@plus -1ex \@minus -.2ex}
{1.5ex \@plus .2ex}
{\normalfont\normalsize\bfseries\boldmath}}
\renewcommand{\@seccntformat}[1]{\csname the#1\endcsname. }
\newtheorem{theorem}{Theorem}
\newtheorem{proposition}{Proposition}
\begin{document}

\begin{center}
{\bf \large Strategy Stealing in Triangle Avoidance Games}
\vskip 20pt
{\bf Alexandru Malekshahian}\\
{\smallit Peterhouse, Trumpington Street, Cambridge CB2 1RD, UK}\\
{\tt am2580@cam.ac.uk}
\end{center}
\vskip 20pt

\vskip 30pt

\centerline{\bf Abstract}
\noindent
In the game of $n-Sim$, two players take it in turn to claim unclaimed edges from a complete graph on $n$ vertices, with the first person to create a triangle in his own edges being the loser. We present some strategy-stealing arguments that show that certain positions are wins for the second player.
These are among the only strategy-stealing arguments that are known for mis\`ere games.

\thispagestyle{empty} 
\baselineskip=12.875pt 
\vskip 30pt

\section*{\normalsize 1. Introduction}

The game of $n-Sim$ is defined as follows: given $n$ vertices, two players, PI and PII, take turns colouring one previously uncoloured edge, with PI going first. The players use different colours, and the first player that is forced to draw a triangle with all edges of the same colour loses. For $n \geq 6$, the game can never end in a draw, and we wish to determine which player has the winning strategy.

The first reference to this game that we are aware of is in Simmons [3], with Slany [4] using computer analysis to prove that PII has a winning strategy for $n=6$. $n-Sim$ is an example of an {\it avoidance}, or {\it mis\`ere} game. 

For achievement games, a simple strategy-stealing argument shows that, with perfect play, PII can never win. However, the problem of adapting this argument to the case of mis\`ere games is still open. For a more general discussion of  avoidance games, see, e.g., Beck [1] or Johnson, Leader and Walters [2].

In this paper, we make use of strategy stealing to study several game positions of $n-Sim$ and prove they are PII wins. We shall always denote PI's moves in green and PII's in red.

We first define the following configuration on 5 vertices, and call it a 'drawn $K_5$':

\begin{center}
    \includegraphics[scale=0.6]{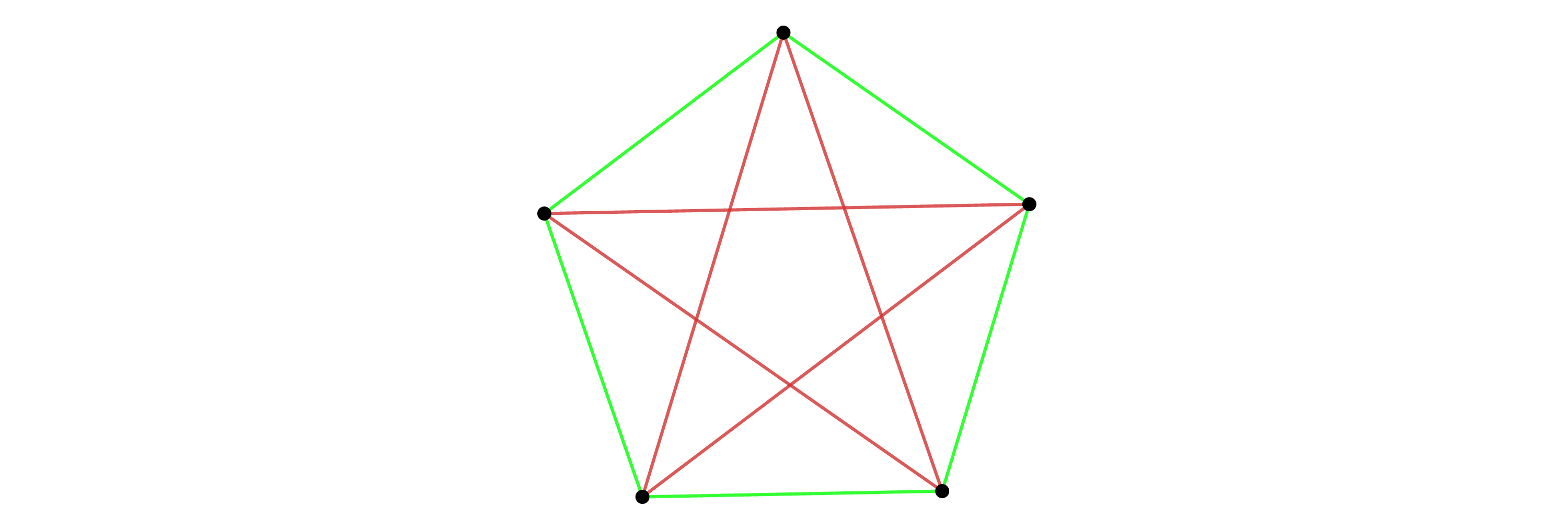}

    A drawn $K_5$
\end{center}

The existence of the drawn $K_5$ shows that $5-Sim$ can end in a draw. In fact, the drawn $K_5$ is the unique draw position up to isomorphism.

It is therefore natural to ask what happens if, say for $n$ a multiple of 5, the players start off by playing so as to decompose the board into disjoint drawn $K_5$'s. This is our first result:

\begin{theorem} For $n$ a multiple of 5, the position with the board decomposed into $n/5$ disjoint drawn $K_5$'s is a second-player win.
\end{theorem}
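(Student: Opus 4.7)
The plan is to use the colour-symmetry of $P_0$. Each drawn $K_5$ admits a colour-swap automorphism: a $4$-cycle $\sigma_i$ on four of its five vertices takes the green $5$-cycle onto the red $5$-cycle. Forming the product $\pi = \sigma_1 \sigma_2 \cdots \sigma_{n/5}$ across the components gives a vertex permutation of the full board with $\pi(G_0) = R_0$ and $\pi(R_0) = G_0$, where $G_0, R_0$ are the green and red edges of $P_0$. I would run a strategy-stealing argument using this colour-swap symmetry.

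Suppose, for contradiction, that PI has a winning strategy $\sigma$ from $P_0$, and have PII play the \emph{mirror strategy}: whenever PI plays a cross-edge $e$ in green, PII responds by playing $\pi(e)$ in red. The key safety claim to check first is that, as long as the response $\pi(e)$ is legal, PII can never be the first to form a monochromatic triangle. Indeed, suppose PII's move $\pi(e)$ completed a red triangle $T$. Applying $\pi^{-1}$ sends the pre-existing red edges of $P_0$ in $T$ to green edges of $P_0$, sends each of PII's earlier responses $\pi(e_j)$ back to PI's green move $e_j$, and sends $\pi(e)$ to $e$. So $\pi^{-1}(T)$ is a triangle made up of three green edges and containing $e$, meaning that PI's own move $e$ had already created a green triangle---contradicting $\sigma$ being winning. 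Since $n \ge 10$ rules out a draw, PI is then forced to form a green triangle first, so PII wins, contradicting the existence of $\sigma$.

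The main obstacle is verifying that the mirror response $\pi(e)$ is always an unplayed edge when PII is called upon to play it. A quick case analysis shows only two possible obstructions: $\pi(e)$ could equal $e$ (if $e$ is a cross-edge joining two of the $n/5$ vertices fixed by the local $4$-cycles), or $\pi(e)$ could equal some earlier PI move $e_j$ (a phenomenon that can occur inside a size-$4$ orbit of $\pi$). To remove both obstructions I would refine the choice of $\pi$: by permitting the colourings of the individual $K_5$'s to be chosen compatibly (which is harmless because the drawn $K_5$ is unique up to isomorphism) and composing the local $4$-cycles with a suitable derangement of the $n/5$ components, one can arrange $\pi$ so that the mirror strategy remains well-defined at every step. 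Pinning down such a $\pi$ uniformly over all $n/5 \ge 2$, and verifying the remaining block cases by hand where the construction is tight, is the technical heart of the argument; once this is in place the safety claim above completes the proof.
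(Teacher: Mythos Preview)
Your approach is genuinely different from the paper's, and as written it has a real gap.

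First, a structural point: you set up a contradiction assuming PI has a winning strategy $\sigma$, but you never use $\sigma$. Your mirror rule ``PI plays $e$, PII replies $\pi(e)$'' is a direct PII strategy; if it works, it wins outright regardless of whether PI has a winning strategy. So the strategy-stealing framing is decorative here --- what you are really attempting is a \emph{pairing} argument.

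The substantive gap is exactly the one you flag but do not close: legality of the reply $\pi(e)$. For the mirror rule to be well-defined against an adversarial PI, the action of $\pi$ on the set of unplayed (cross-)edges must be a fixed-point-free involution. Otherwise PI can break it: if $a\mapsto b\mapsto c\mapsto d\mapsto a$ is a $4$-orbit of edges under $\pi$, PI plays $a$ (PII replies $b$), then PI plays $d$, and your rule asks PII to play $\pi(d)=a$, which is occupied. Your local colour-swap on a drawn $K_5$ is a $4$-cycle on vertices, hence not an involution, and in fact no involution of a single drawn $K_5$ swaps the two colour classes (any involution in $S_5$ fixes at least one edge of the pentagon). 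Composing with a permutation of the components does not obviously rescue this: for two components, a component-swap produces five fixed cross-edges $\{u,\phi(u)\}$; for more components one must simultaneously kill all fixed edges and all long orbits, and you have not shown this can be done. Saying that ``pinning down such a $\pi$ \ldots is the technical heart'' is an admission that the proof is incomplete.

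The paper avoids this obstruction entirely by using strategy stealing rather than pairing. It assumes the position $S$ is a PI win, lets PI make his (unique up to isomorphism) first cross-edge move, and then has PII \emph{ignore} that green edge and play as the first player in the colour-swapped copy of $S$, following PI's assumed winning strategy. The key ``insurance'' idea --- which replaces your unfinished search for a good $\pi$ --- is that PII chooses his first red edge so that the ignored green edge lies inside a red--red--green triangle; hence PI's winning strategy will never instruct PII to play that edge, because doing so would close a red triangle. This single observation makes the stolen strategy well-defined with no global edge-pairing required.
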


This might lead us to believe that, for any value of $n\geq6$, a configuration consisting of only a drawn $K_5$ might be a PII win. However, this is $not$ true, as we will show in the case $n=7$. So the following result is rather surprising: faced with a drawn $K_5$ minus one edge, although completing the  drawn $K_5$ may be losing for PII, he does have a winning move.

\begin{theorem} For $n\geq6$, a position of $n-Sim$ consisting of only a drawn $K_5$ minus one red edge is a PII win.
\end{theorem}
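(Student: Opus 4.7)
The plan is to use a strategy-stealing argument based on a color-swapping automorphism of the drawn $K_5$. Label the $K_5$ vertices $1,\ldots,5$ so that the green pentagon is $1$--$2$--$3$--$4$--$5$--$1$ and the red pentagram is $1$--$3$--$5$--$2$--$4$--$1$, and assume the missing red edge is $\{1,3\}$. The $4$-cycle $\sigma=(2\,4\,5\,3)$, fixing vertex $1$, sends green edges to red edges and vice versa: in particular $\sigma(\text{pentagon})=\text{pentagram}$. Writing $G$ and $R$ for the sets of green and red edges, the full drawn $K_5$ satisfies $\sigma(G)=R$ and $\sigma(R)=G$, i.e.\ $\sigma$ is a color-swapping symmetry.

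For $n$ with $|V_2|=n-5\geq 3$ (where $V_2=\{6,\ldots,n\}$), I would first verify the following strategy. PII's opening move completes the red pentagram by playing red $\{1,3\}$, restoring the full drawn $K_5$ on $V_1$. PII then extends $\sigma$ to a permutation $\widetilde\sigma$ of $V$ whose restriction to $V_2$ is a product of disjoint cycles all of length at least $3$ (possible exactly when $|V_2|\geq 3$). A direct check shows $\widetilde\sigma$ has no fixed edges in $K_n$. PII's subsequent strategy is: respond to each green move $e$ of PI with red $\widetilde\sigma(e)$. The invariant $\widetilde\sigma(G)=R$ is restored after every PII move; and if PII's response ever completes a red triangle $T$, then pulling $T$ back under $\widetilde\sigma^{-1}$ produces a green triangle containing $e$, meaning PI's own move $e$ was already losing. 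Thus PII never creates a red triangle before PI does, and since $n\geq 6$ excludes draws, PI must lose first.

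The main obstacle is the small cases $n=6$ and $n=7$, where $|V_2|<3$ forces $\widetilde\sigma$ to have fixed edges on $K_n$ (no matter how $\widetilde\sigma|_{V_2}$ is chosen, since any action on $1$ or $2$ vertices gives either a fixed point or a $2$-cycle, and both produce fixed edges against the $\sigma$-fixed vertex $1$ or against each other). This is consistent with the paper's remark that completing the drawn $K_5$ can be losing, as happens for $n=7$. For these values one must choose a different first move for PII: an edge in $V_1\times V_2$ or $V_2\times V_2$ that ``pre-absorbs'' the problematic fixed edges, possibly combined with a different color-swapping symmetry (there are ten color-swapping vertex permutations of the drawn $K_5$, forming a coset of the color-preserving $D_5$-automorphism group). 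I expect the bulk of the technical work of the proof to lie in these small cases and in verifying that the chosen first move indeed establishes the color-swap invariant needed for the strategy-stealing step to go through.
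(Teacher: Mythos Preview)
Your proposed mirror strategy for $n\geq 8$ has a genuine gap: responding to each green move $e$ with $\widetilde\sigma(e)$ only gives a well-defined strategy when $\widetilde\sigma$ is an \emph{involution} on edges (so that edges are paired). But every colour-swapping automorphism of the drawn $K_5$ has order~$4$, not~$2$. Indeed, identifying the vertices with $\mathbb{Z}/5$ so that green edges are the pairs at difference $\pm 1$ and red edges those at difference $\pm 2$, the colour-swapping maps are exactly the affine maps $x\mapsto ax+b$ with $a\in\{2,3\}$, and both $2$ and $3$ have multiplicative order~$4$ modulo~$5$. Hence no extension $\widetilde\sigma$ is an involution, and the orbit structure on edges lets PI break your strategy. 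Concretely, with $n=8$ and $\widetilde\sigma=(2\,4\,5\,3)(6\,7\,8)$: after you complete the $K_5$, PI plays $\{6,7\}$, you reply $\{7,8\}$; PI then plays $\{6,8\}$, and your prescribed reply $\widetilde\sigma(\{6,8\})=\{6,7\}$ is already green. So the invariant $\widetilde\sigma(G)=R$ cannot be maintained, and the argument that a red triangle pulls back to an earlier green triangle never gets off the ground. You also leave $n=6,7$ essentially untreated.

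The paper's proof avoids all of this with a much shorter dichotomy. Let $e$ be the missing red edge. If playing $e$ (completing the drawn $K_5$) is winning for PII, he plays it and is done. If not, then the full drawn $K_5$ is a PI-win position; but the drawn $K_5$ is colour-swap isomorphic to itself, so PII simply \emph{pretends} that $e$ is already present (red) and steals PI's winning strategy from the drawn-$K_5$ position. The ``insurance'' making this legitimate is that PI can never actually play $e$, since $e$ completes a green triangle with two existing green edges of the $K_5$. This argument is uniform in $n\geq 6$, requires no explicit symmetry of the ambient $K_n$, and in particular needs no separate treatment of the small cases; it is precisely the ``PII pretends about one edge, with triangle insurance'' idea flagged in the introduction, rather than a direct mirror strategy.
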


In fact, it turns out that we can push back our analysis of $n-Sim$ to quite close to the start of the game:

\begin{theorem} For $n\geq6$, the following position of $n-Sim$ consisting of only $3$ moves is a PII win:

\begin{center}
    \includegraphics[scale=0.6]{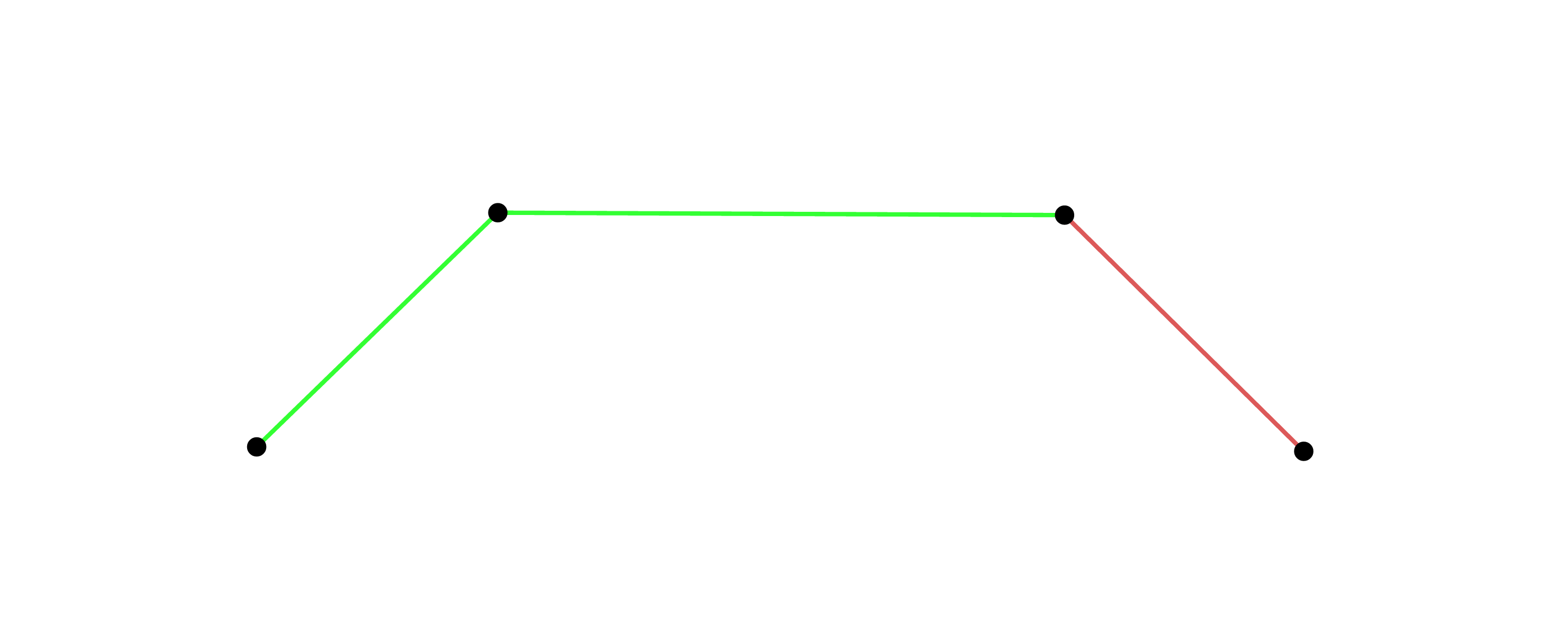}
\end{center}
\end{theorem}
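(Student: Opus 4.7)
The plan is to reduce Theorem~3 to Theorem~2. Since $C_3$ consists of two green and one red edge, it is PII's turn to move; the goal is to reach, after three more rounds, a drawn $K_5$ minus one red edge on the same five vertices, again with PII to move---precisely the Theorem~2 position.

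Concretely, I would fix the unique drawn $K_5$ on those five vertices that extends $C_3$: its five green edges form a Hamilton $5$-cycle and its five red edges form the complementary pentagram. Call these the \emph{intended} edges. PII's baseline strategy is to play, on each of his next three moves (moves $4$, $6$, $8$), one of the four still-missing intended-red edges, saving one to leave the ``minus one red edge'' gap. Since the five intended-red edges form a $5$-cycle---triangle-free---PII never creates a red triangle by playing within the intended structure. If PI cooperates by playing intended-green edges on moves $5$, $7$, $9$, then after move $9$ we reach exactly the Theorem~2 position, and Theorem~2 closes the game.

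The real work lies in handling PI's \emph{disruptive} moves---either an edge outside the designated $K_5$, or an intended-red edge played as green inside the $K_5$ (which corrupts the target drawn-$K_5$ structure). I expect this to be the principal obstacle. When PI plays outside the $K_5$, PII should reply outside, for example via a pairing strategy, pausing the $K_5$-construction while preserving the invariant that no monochromatic triangle is forced. When PI plays an intended-red edge as green, PII should try to redirect to a different drawn $K_5$ on the same five vertices compatible with the new coloring (different $5$-cycles through the existing green edges yield different candidate drawn $K_5$'s). When neither direct adaptation suffices, I would layer on a strategy-stealing argument in the spirit of Theorem~2: assume PI has a winning strategy from $C_3$, then exploit PII's ability to safely extend the partial pentagram as an extra ``free'' red move, combine it with the stolen strategy, and derive a contradiction.

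The technical crux is organising the case analysis so that every branch of PI's play terminates either in a direct application of Theorem~2 or in a strategy-stealing contradiction, and keeping the case tree manageable by showing that most of PI's disruptive options are in fact self-defeating.
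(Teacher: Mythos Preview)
Your proposal is not a proof but a plan, and the plan has a genuine gap exactly where you yourself flag it: the handling of PI's disruptive moves. None of the three mechanisms you sketch---pairing outside the $K_5$, redirecting to a different drawn $K_5$, or layering on a strategy-steal as a fallback---is actually carried out, and there is good reason to doubt they can be. A pairing outside the five vertices does not prevent triangles that straddle inside and outside, so ``pause the construction and preserve the invariant'' is not obviously sound. Redirecting to a different drawn $K_5$ after PI colours an intended-red edge green presumes such an alternative always exists and is compatible with \emph{all} edges already played; you give no argument for this, and once two or three edges are down the set of compatible $5$-cycles can shrink to nothing. The fallback strategy-steal is stated only as an intention. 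As written, the case tree is open-ended.

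The paper's proof avoids all of this by applying strategy stealing \emph{directly to the three-move configuration}, with no reduction to Theorem~2 and no case analysis. The crucial observation is that the position $U$ (two green edges $f,g$ and one red edge $h$) is, after one imagined swap, colour-swap isomorphic to itself: PII pretends that a fourth edge $e$ (the one completing a triangle with the two green edges) is already red and that $f$ was never drawn; the resulting imagined position $\{e,h \text{ red},\; g \text{ green}\}$ is colour-swapped $U$. Assuming $U$ is a PI win, PII adopts the presumed winning strategy from that imagined position, treating PI's actual move $f$ as the opponent's reply. The ``insurance'' is that $e$ closes a green triangle with $f$ and $g$, so PI can never play $e$ in reality, and PII never needs to since in his imagination it is already there. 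That is the whole proof.

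What you buried as a last-resort fallback is in fact the entire argument; the attempt to steer play towards the Theorem~2 position is unnecessary and, as sketched, unsubstantiated.
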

\newpage
The proofs of all three results involve notions of strategy stealing. Our key idea to make the argument work in the mis\`ere setting is that, given a configuration S with one more green edge than red and supposing it is a PI win, PII ignores one of the green edges and pretends there is an extra red one so that the new configuration is colour-swapped-isomorphic to the original one. {\bf Crucially}, we need some 'insurance' on one of the two edges that PII is pretending about, e.g. the green edge that we are ignoring forms a triangle with two other red edges, so PII will never be called upon to play the preexisting green edge while strategy stealing.

\vskip 30pt

\section*{\normalsize 2. Discussion of drawn $K_5$ configurations}

We first prove Theorem 1.
\begin{proof}
Suppose that $n=5k, k\geq2$ and the two players have partitioned the board into $k$ disjoint drawn $K_5$'s. Call this configuration $S$. Suppose that PI has a winning strategy from $S$. Notice that from this configuration, PI only has one move available, up to isomorphism, and wlog the game state after this move is the following (where there are in total $k$ drawn $K_5$'s):
\begin{center}
    \includegraphics[scale=0.35]{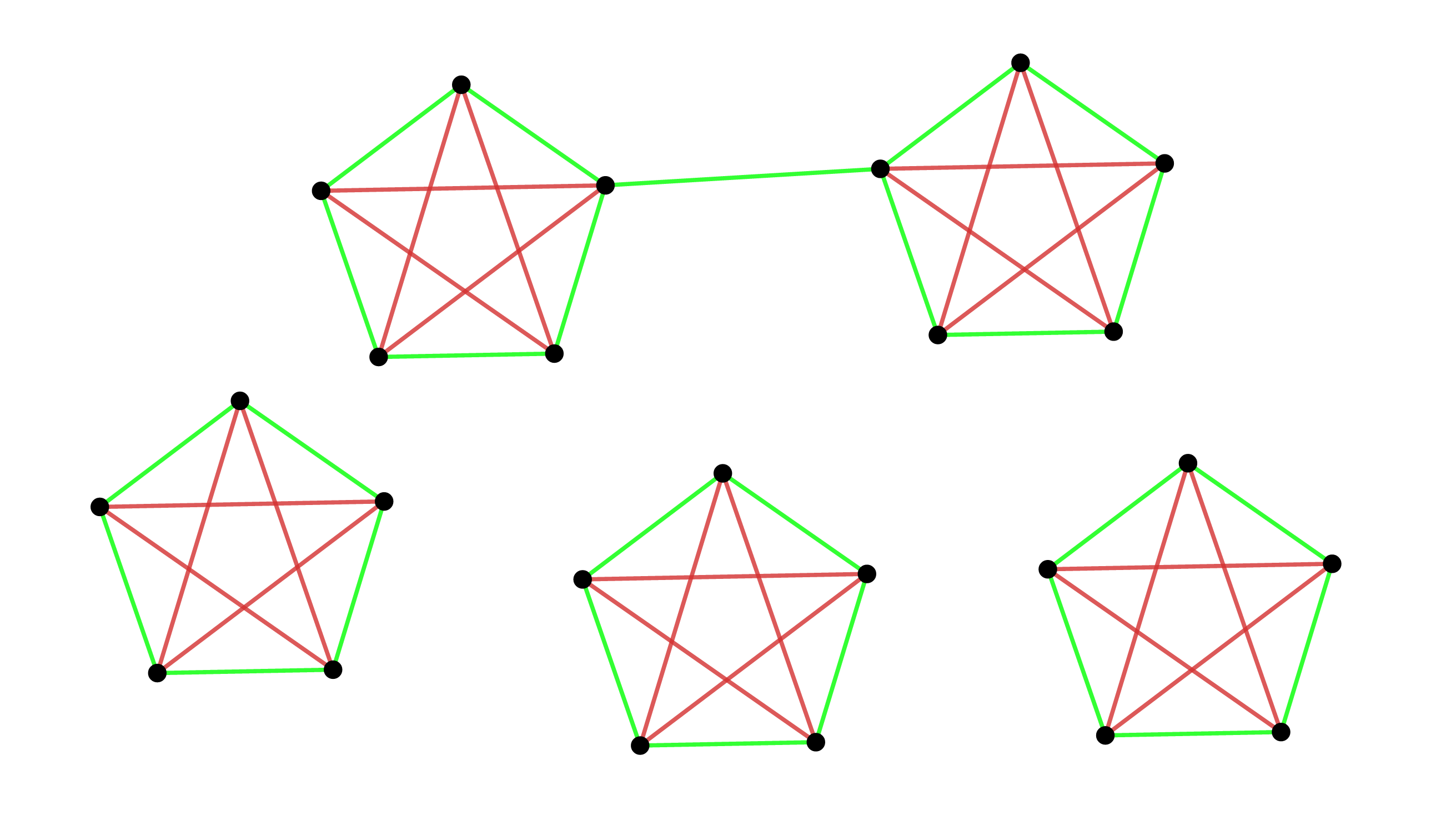}
\end{center}

But we have supposed that PI has a winning strategy which requires him to start by drawing any green edge joining two vertices of distinct copies of $K_5$. Now PII can pretend that this new green edge has not been drawn and the game state is still $S$. Notice that $S$ is isomorphic to $S'$ - that is, $S$ with the colours of the edges swapped. Therefore PII can use PI's presumed winning strategy, and start by drawing the following red edge (the dotted green edge is the one that PII is ignoring):

\begin{center}
    \includegraphics[scale=0.35]{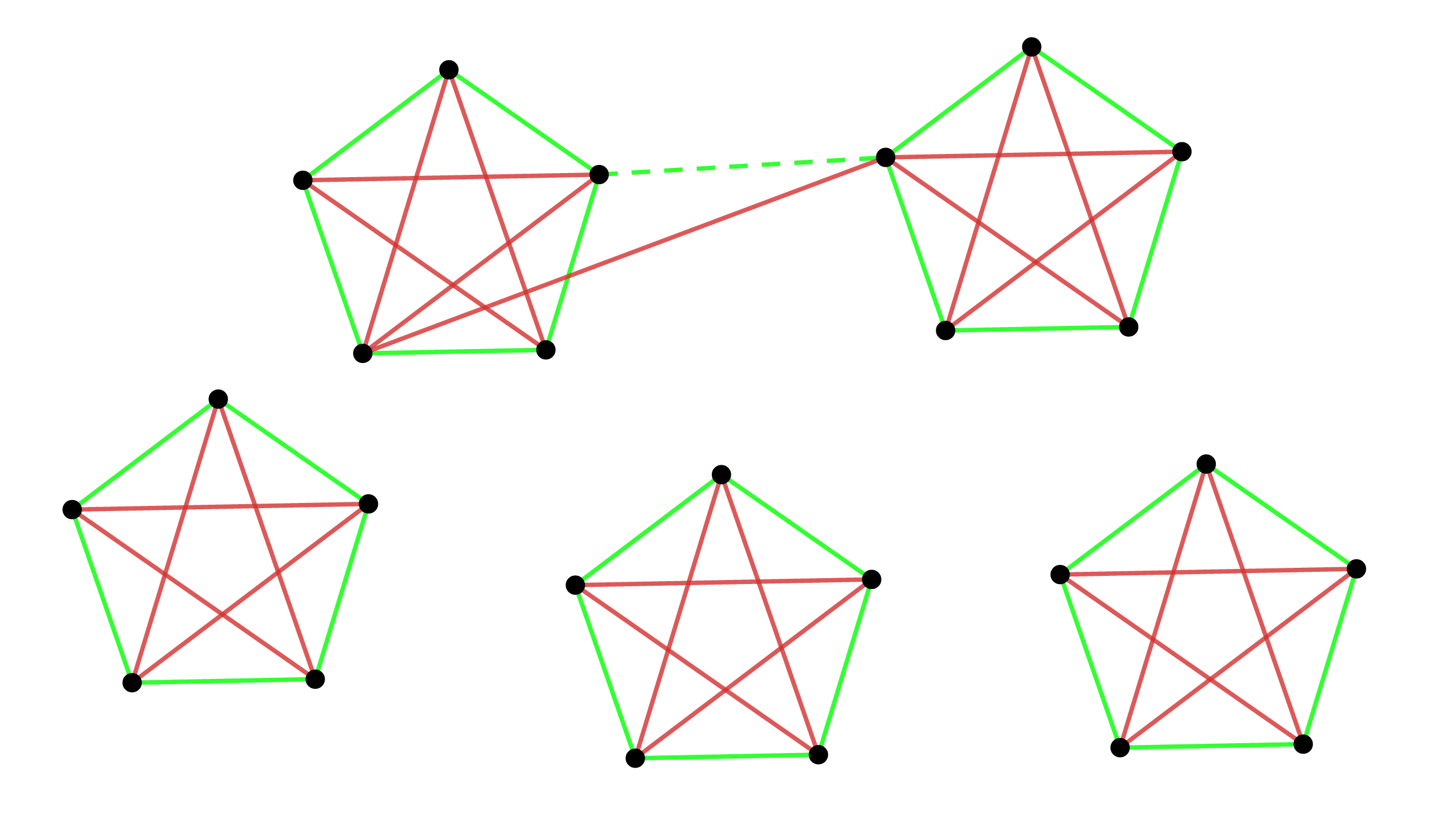}
\end{center}

From here onward PII continues to play according to the supposed PI winning strategy. Finally, notice that PII will never be required to colour the dotted green edge that he is ignoring, as that would close a red triangle, in contradiction with the fact that PII is following a winning strategy.
Therefore PI cannot have a winning strategy from configuration $S$, and as no draws are possible in $n-Sim$ for $n\geq6$, it follows that there exists a PII winning strategy from this game state.
\end{proof}

We remark that Theorem 1 also follows from results in [2]. We gave a proof above because we will build on this in later results.

We now turn our attention to the case where the initial configuration $T$ of $n-Sim$ consists of just one drawn $K_5$ (and $n-5$ isolated vertices). We have the following result:

\begin{proposition}

\textbf{}{(i)} \normalfont For any $n\geq6$, if PI, playing from position $T$ joins a vertex of the drawn $K_5$ to an isolated vertex, he loses. In particular, for $n=6$, PI always loses from position $T$.

\textbf{(ii)} \normalfont PI has a winning strategy starting from $T$ in $7-Sim$.
\end{proposition}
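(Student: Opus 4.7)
For (i), the plan is a strategy-stealing argument closely modelled on the proof of Theorem 1 and the key idea highlighted in the introduction. Without loss of generality, suppose PI plays the green edge $v_1 u_1$, where $v_1$ is a vertex of the drawn $K_5$ and $u_1$ is an isolated vertex. Let $v_3$ be one of the two red neighbors of $v_1$ in the drawn $K_5$, and let $\sigma$ be a colour-swap automorphism of the drawn $K_5$ with $\sigma(v_1) = v_3$, extended to fix every vertex outside the $K_5$. Suppose for contradiction that the position $P_0 := (T + v_1 u_1 \text{ green},\ \text{PII to move})$ is a PI-win. We then have PII open with the red move $v_3 u_1$ (the $\sigma$-image of $v_1 u_1$) and continue by following PI's presumed winning strategy from $P_0$, transported via $\sigma$ together with the colour swap, while pretending throughout that the pre-existing green edge $v_1 u_1$ has not been drawn.

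The crucial check is the "insurance" described in the introduction: that PII's transported strategy never calls on her to play the ignored edge $v_1 u_1$ as a red move. This is guaranteed because, immediately after PII's first move, both $v_1 v_3$ (from the original red $5$-cycle) and $v_3 u_1$ are red, so playing $v_1 u_1$ as red would close the red triangle $v_1 v_3 u_1$; a strategy that wins cannot instruct its user to close a triangle in her own colour. Granted the insurance, the transported moves give a winning strategy for PII from $P_0$, contradicting the assumption. The main obstacle is to make the bijection between "real" and "imagined" positions rigorous from one turn to the next: the colour-swap automorphisms of the drawn $K_5$ are not involutions, so some care is needed to check that the correspondence between the two games is maintained despite the asymmetry introduced by the ignored green edge.

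For (ii) we instead construct a winning strategy for PI directly. By (i), PI's opening cannot be any edge of the form $v_i u_j$, which leaves $u_1 u_2$ as essentially the only candidate; we take this as PI's first move. We then perform a case analysis on PII's replies, cutting down the number of cases by using the dihedral symmetries of the drawn $K_5$ together with the $u_1 \leftrightarrow u_2$ interchange. For each surviving case we aim to exhibit a PI continuation that either immediately forces PII to close a red triangle or leads to a position whose losing status for PII can be confirmed by direct inspection. The main obstacle here is keeping this case analysis reasonably short and self-contained, so as not to rely on results such as Theorem 2 or Theorem 3 that are proved only later in the paper.
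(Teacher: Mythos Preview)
Your plan is correct and matches the paper's proof in both parts: for (i) the paper uses exactly the strategy-steal you describe (PII plays the red edge to the same isolated vertex from a red neighbour of $v_1$, ignores the green $v_1u_1$, and the insurance is the red triangle $v_1v_3u_1$), and for (ii) the paper also opens with the edge between the two isolated vertices and finishes by a short case analysis.

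Two remarks. First, your worry about $\sigma$ not being an involution is a red herring: once PII has played $v_3u_1$, she does not keep transporting moves through $\sigma$; she simply plays the winning strategy \emph{directly} in the imagined position $T+\text{red }v_3u_1$ (which is, once and for all, colour-swap--isomorphic to $P_0$), and the real and imagined games differ only in the single ignored edge $v_1u_1$, which neither player ever plays. Second, for (ii) the paper shortens the case analysis with a counting observation you may find useful: since the red (respectively green) edges inside the drawn $K_5$ form a $5$-cycle, each player can draw at most two edges from each of $u_1,u_2$ into the $K_5$ without closing a triangle, so after PI's move $u_1u_2$ it suffices to show that PI can always make at least as many safe moves as PII; this reduces the whole analysis to two cases.
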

\begin{proof}
\textbf{(i)} Suppose PI has a winning strategy from position $T$ requiring him to connect an isolated point to a vertex of the drawn $K_5$, i.e. suppose the following game state is a PI win:

\begin{center}
    \includegraphics[scale=0.4]{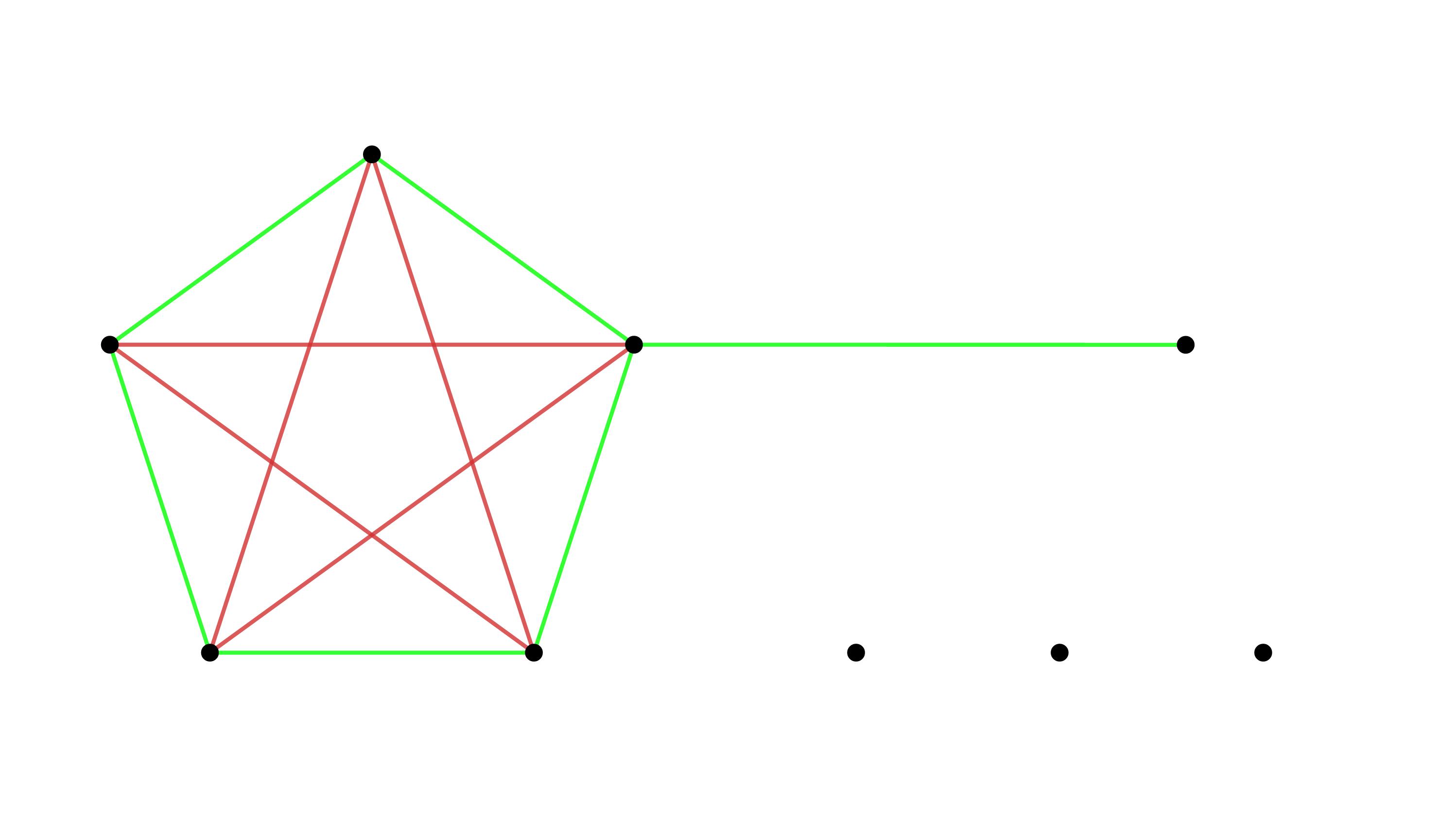}
\end{center}

Then PII can ignore the last move PI made and draw the following edge:

\begin{center}
    \includegraphics[scale=0.4]{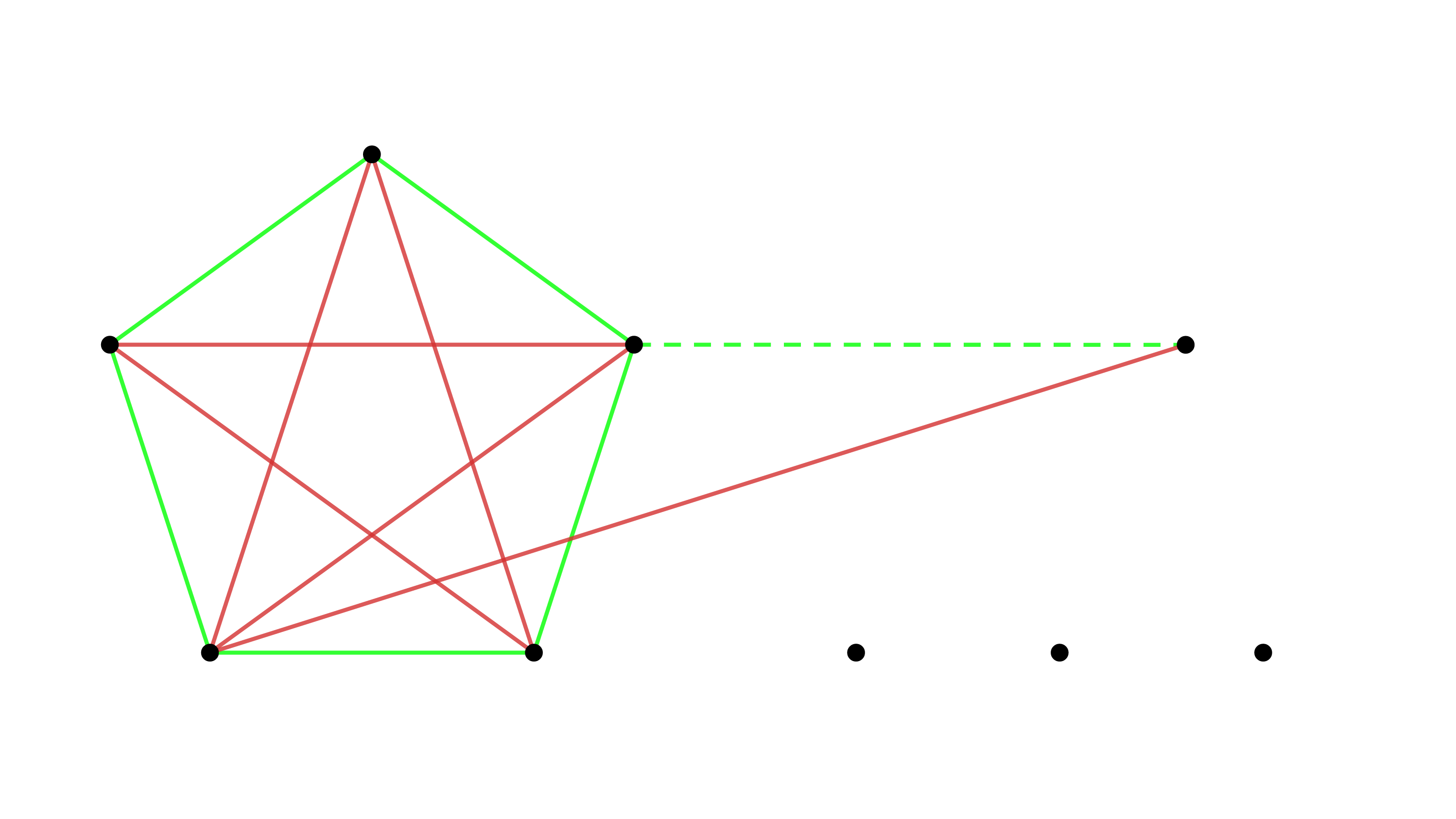}
\end{center}

Then the position that PII is in is isomorphic to the colour-inverted previous position, therefore PII can follow the presumed winning strategy of PI. Finally, notice that PII will never be called upon to play the dashed green edge that he is ignoring, since that would close a red triangle. Therefore PII can steal PI's presumed winning strategy.
In particular, this means that PII always wins in $6-Sim$ starting from a drawn $K_5$ and one isolated vertex.

\textbf{(ii)} We proceed by exhaustion. By \textbf{(i)}, we know that PI must start by connecting the two isolated vertices. Call them $X$ and $Y$, and label the 5 vertices of the drawn $K_5$ from A to E.

\begin{center}
    \includegraphics[scale=0.35]{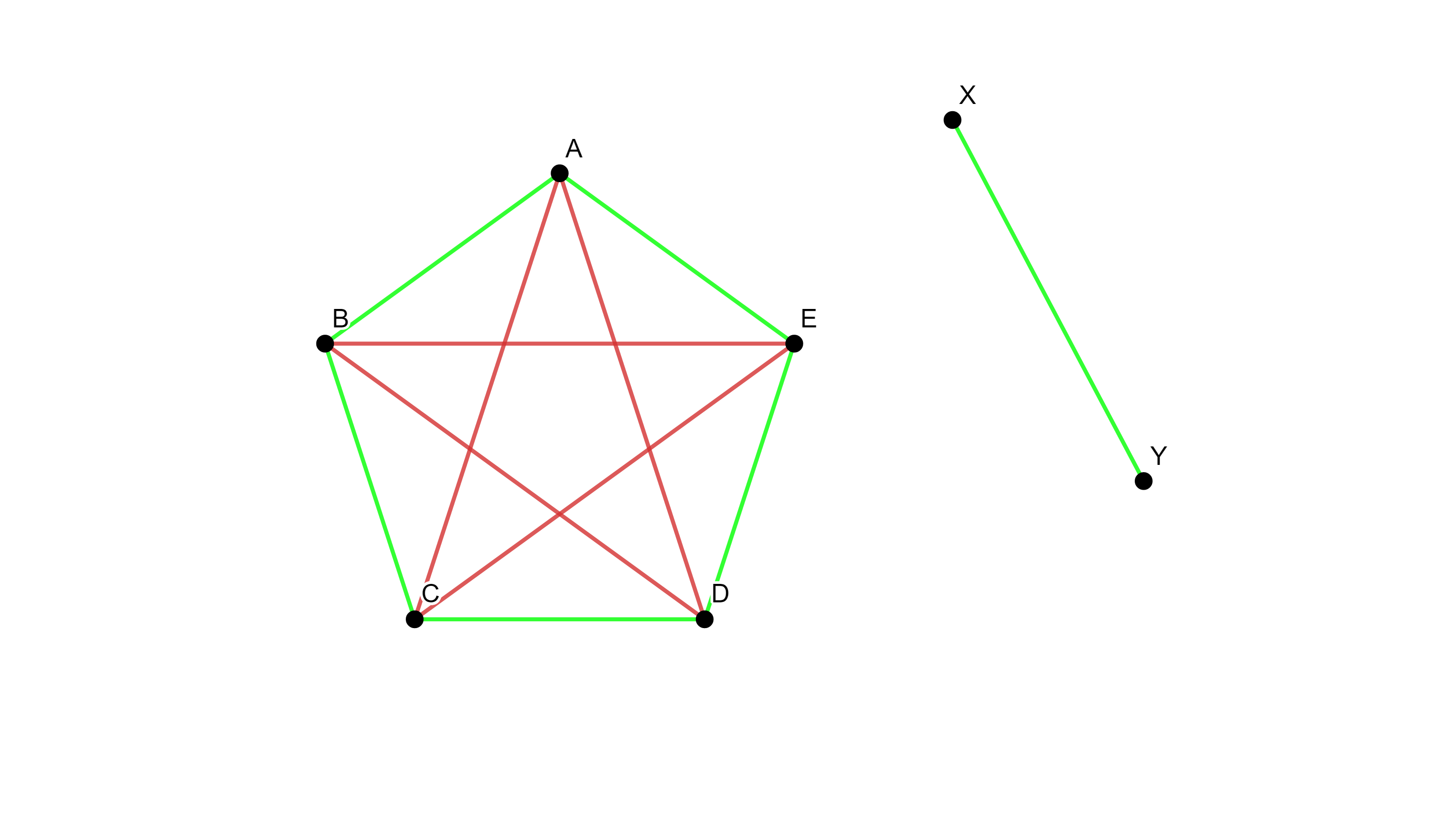}
\end{center}

Note that each player can draw at most two edges from either of $X$ or $Y$ into the drawn $K_5$, meaning 4 moves in total. Since PII goes first from this configuration, to ensure a PI win it is sufficient to show that he can always make at least as many moves as PII. Now PII has a unique move available, up to isomorphism, so wlog he draws the edge connecting $X$ and $A$. Then PI draws $XB$, and the game state is as follows, with $XE$ shown in dotted red being the only edge that PII can still draw from $X$:

\begin{center}
    \includegraphics[scale=0.35]{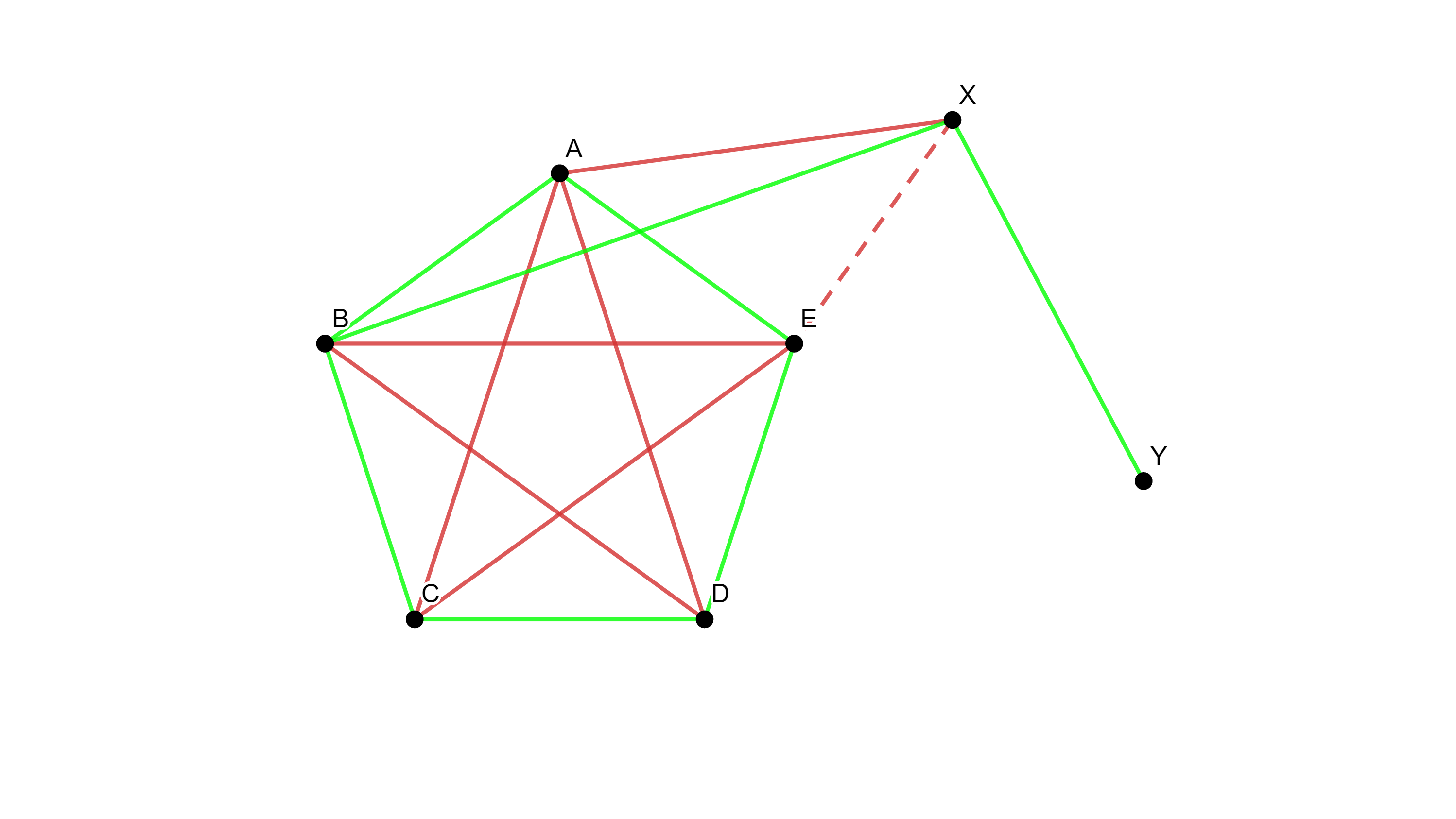}
\end{center}

\textbf{Case 1} PII draws the dotted red edge $XE$ on his next move. Then PI draws $YC$. But now, regardless of what PII's next move is, PI can draw at least one of $YA$ and $YE$ without closing a green triangle, and after PII's fourth move, PI draws $XD$. Therefore PI wins.

\textbf{Case 2} PII draws an edge other than $XE$, i.e. an edge connecting $Y$ and a vertex of the drawn $K_5$. Then PI draws $XE$, and therefore PII has no more moves available that touch $X$, hence can only make one more move. Then regardless of PII's third (and final) move, at least one of $YA$, $YC$ and $YD$ will be unoccupied, so PI can draw one of them without closing a green triangle, therefore again winning.
\end{proof}

Part $(ii)$ of the previous Proposition is in stark contrast to both Theorems 1 and 2. Theorem 2 tells us that a drawn $K_5$ missing one edge is always a PII win, but PII's drawing of this missing edge is not necessarily a winning move. We now prove this.
\begin{proof}
Consider the missing edge $e$ of the drawn $K_5$. If this is a winning move for PII, then he draws it. Suppose now that $e$ is a losing move for PII. This implies that a drawn $K_5$ is a PI winning position. But as the drawn $K_5$ is isomorphic to itself with the colours swapped, PII can pretend that he's already drawn $e$ and now employs the winning strategy starting from the drawn $K_5$. Notice that PI can never draw $e$ as it would close a green triangle.
Therefore for any $n\geq6$, PII has a winning strategy starting from a drawn $K_5$ minus one (red) edge.
\end{proof}

\vskip 30pt

\section*{\normalsize 3. Proof of Theorem 3}

Theorem 3 follows easily using the same strategy-stealing arguments as before.

\begin{proof}

For convenience, we label the edges of the configuration (call it $U$) as follows:
\begin{center}
    \includegraphics[scale=0.6]{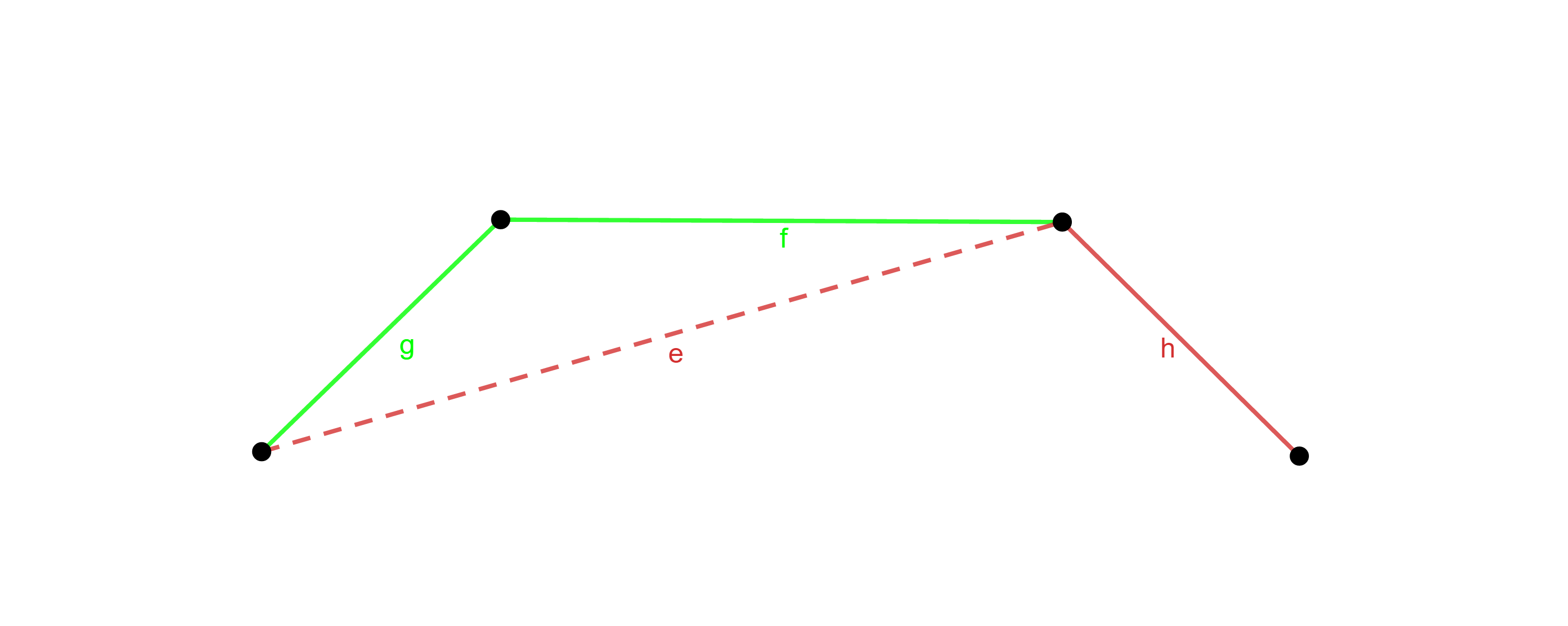}
\end{center}
We have also included a dotted edge $e$ that is not part of $U$.

Suppose that configuration $U$ is a PI win. Then PII pretends that he is the first player of the game and that the game position consists of the edges $e,h$ in red and $g$ in green, but not $f$, and it is his opponent's turn. Then the configuration is a colour-swapped $U$, therefore PII has a winning strategy by our assumption. But then PII can further pretend that his opponent's next move was colouring edge $f$, therefore PII can play according to the presumed winning strategy. Contradiction.

Crucially, in the above argument PII pretends that edge $e$ is already red but never colours it, and PI can never color it as that would close a green triangle.
\end{proof}

Unfortunately, we do not see any way to push the argument of Theorem 3 back to the starting position (or, rather, the position after PI's first move). Some new ideas would be needed for this.

\section*{\normalsize Acknowledgements}
The author wishes to thank Professor Imre Leader for his helpful insights and suggestions during the development of this article.

\section*{\normalsize References}

\hspace{\parindent}[1] J. Beck, {\it Combinatorial Games, Volume 114 of Encyclopedia of Mathematics and its Applications}, Cambridge University Press, Cambridge, 2008.

[2] J. R. Johnson, I. Leader, and M. Walters, Transitive avoidance games, {\it Electron. J. Combin.}  {\bf24} (2017), P1.61

[3] G. Simmons, The game of SIM, {\it J. Rec. Math.} (1969), 2(2):66.

[4] W. Slany, Graph Ramsey games, {\it Electronic Colloquium on Computational Complexity (ECCC)} {\bf47} (1999).

\end{document}